\newtheorem{lemma}{Lemma}
\newtheorem{theorem}{Theorem}
\newtheorem{corollary}{Corollary}
\newcommand{\dN}{\mathbb {N}}
\newcommand{\dR}{\mathbb {R}}
\newcommand{\vv}{\mathfrak {v}}
\newcommand{\lip}{\textsc{lip}}
\newcommand{\EE}{{\mathbb{E}}}
\newcommand{\PP}{{\mathbf{P}}}
\newcommand{\cP}{\mathcal{P}}
\newcommand{\cX}{\mathcal {X}}
\newcommand{\cI}{\mathcal {I}}
\newcommand{\cW}{\mathcal {W}}
\newcommand{\dtv}{d_{\textsc{tv}}}
\title{Spectral gap and curvature of monotone Markov chains}
\author{Justin Salez\footnote{CEREMADE, CNRS, UMR 7534, Université Paris-Dauphine, PSL University, 75016 Paris, France}}
\begin{document}

\maketitle
\begin{abstract}
We prove that the absolute spectral gap of any monotone Markov chain coincides with its optimal Ollivier-Ricci curvature, where the word \emph{optimal} refers to the choice of the underlying metric. Moreover, we provide a new expression in terms of local variations of increasing functions, which has several practical advantages over the traditional variational formulation using the Dirichlet form.  As an illustration, we explicitly determine the optimal curvature and spectral gap of the non-conservative exclusion process with heterogeneous reservoir densities on any network, despite the lack of reversibility. 
\end{abstract}

%\tableofcontents
\section{Introduction}

\paragraph{Spectral gap.} Throughout the paper, we fix an irreducible  stochastic matrix $P$ on a finite state space $\cX$. Among the various parameters that are traditionally used to quantify the mixing properties of $P$, the most basic one is arguably the (absolute) \emph{spectral gap}
\begin{eqnarray*}
\gamma_\star & = & 1-\max\left\{|\lambda|\colon \lambda\textrm{ is a non-trivial eigenvalue of }P\right\},
 \end{eqnarray*} 
where \emph{non-trivial}  means that constant eigenfunctions are excluded. This quantity classically captures the rate of convergence to equilibrium of the chain, in the sense that
\begin{eqnarray*}
\left(\dtv(t)\right)^{1/t} & \xrightarrow[t\to\infty]{} & 1-\gamma_\star,
\end{eqnarray*}
where $\dtv(t)$ denotes the worst-case total-variation distance to stationarity at time $t$. 
Under the usual assumption that $P$ is lazy and reversible, the spectral gap coincides with the Poincaré constant and hence provides universal controls on mixing times,  concentration and isoperimetry. We refer the unfamiliar reader to the classical textbooks \cite{MR3726904, MR2341319} for details.

\paragraph{Ollivier-Ricci curvature.} Let us now assume that $\cX$ is equipped with a metric $\rho$. When the support of $P$ is symmetric, a canonical choice is of course the \emph{hop-count distance} 
\begin{eqnarray}
\label{def:default}
\rho(x,y) & := & \min\{t\ge 0\colon P^t(x,y)>0\},
\end{eqnarray}
but more sophisticated variants can prove fruitful, as we shall see. We can then use optimal transport to   `lift' the metric $\rho$ to the distributional level. Specifically,  the \emph{Wasserstein distance} between two probability measures $\mu$ and $\nu$ on $\cX$ is defined as follows:
\begin{eqnarray*}
\cW\left(\mu,\nu\right) & := & \min_{X\sim\mu,Y\sim\nu}\EE\left[\rho(X,Y)\right],
\end{eqnarray*}
where the minimum runs over all possible couplings $(X,Y)$ of $\mu$ and $\nu$. Minimizers always exists by  compactness, and will later be referred to as \emph{optimal} couplings. 
Following Ollivier \cite{ollivier2009ricci,ollivier2010survey}, we then define the \emph{curvature} of $P$  to be  the largest number $\kappa\in\dR$ such that 
\begin{eqnarray}
\label{def:curvature}
\forall \mu,\nu\in\cP(\cX),\qquad \cW(\mu P,\nu P) & \le & (1-\kappa)\cW(\mu,\nu).
\end{eqnarray}
By Kantorovich duality, this is in fact equivalent to the functional inequality
\begin{eqnarray*}
\forall f\in {\mathbb C}^{\cX},\qquad \|Pf\|_{\lip} & \le & (1-\kappa)\|f\|_{\lip},
\end{eqnarray*}
 where $\|f\|_\lip:=\max_{x\ne y}\frac{|f(x)-f(y)|}{\rho(x,y)}$ denotes the Lipschitz constant of $f$.
In particular, choosing $f$ to be a non-constant eigenfunction of $P$ readily shows that
\begin{eqnarray}
\label{ineq}
\kappa & \le & \gamma_\star.
\end{eqnarray}
In words,  the curvature  always constitutes a lower bound on the absolute spectral gap. More importantly, this  fundamental quantity has been shown to provide an array of powerful controls in terms of geometry \cite{jost2019Liouville,munch2019non}, mixing times \cite{646111,MR2316551}, expansion \cite{salez2021sparse, Muench2023}, concentration of measure \cite{MR2683634,joulin2007poisson,eldan2017transport}, spectral independence \cite{MR4415182} and even the cutoff phenomenon \cite{salez2021cutoff}.

\paragraph{Optimal curvature.} An elementary but crucial observation  is that the left-hand side  of the inequality (\ref{ineq}) depends on the choice of the underlying metric $\rho$, whereas the right-hand side does not. This asymmetry can obviously be turned to one's advantage, as follows: instead of blindly setting $\rho$ to the default choice (\ref{def:default}),  one can treat it as a variable which one can try to fine-tune so as to optimize the bound (\ref{ineq}). Beyond the spectral gap, this strategy can be applied to boost any estimate that uses the Ollivier-Ricci curvature as an input. It has already led to remarkable improvements in the mixing-time analysis of several important Markov chains, and we refer the interested reader to the seminal papers \cite{MR1716763,Bordewich2005MetricCS} for an introduction to this   line of research. Motivated by this fertile idea, we here define the \emph{optimal curvature} of the chain to be the number
\begin{eqnarray*}
\kappa_\star & := & \sup_{\rho}\kappa(\rho),
\end{eqnarray*}
where $\kappa(\rho)$ denotes the curvature of $P$ with respect to the metric $\rho$, and where the supremum runs over all possible metrics $\rho$ on $\cX$. While highly meaningful from a  theoretical viewpoint, this \emph{intrinsic} notion of curvature may seem difficult to manipulate in practice: its definition involves three nested optimization problems that a priori require a test over all pairs of probability measures $(\mu,\nu)$, all couplings $(X,Y)$ of $\mu P$ and $\nu P$, and all metrics $\rho$ on $\cX$. However, our main discovery is that $\kappa_\star$ admits a remarkably simple expression as soon as $P$ satisfies a classical and widespread property known as \emph{monotony}.

\paragraph{Monotone chains.} From now on, we fix a partial order $\preceq$ on our state space $\cX$. To avoid degeneracies, we assume that this relation is rich enough to \emph{connect} $\cX$ in the usual sense: for any states $x,y\in\cX$, there is a finite sequence $(x_0,\ldots,x_\ell)$ such that $x_0=x$, $x_\ell=y$, and for all $1\le i\le \ell$,  $x_{i-1}\preceq x_{i}$ or $x_{i}\preceq x_{i-1}$. We let $\cI$ denote the cone of functions $f\colon\cX\to\dR$ which are  \emph{increasing}, meaning that $f(x)\le f(y)$ whenever $x\preceq y$. Finally, we say that our matrix $P$ is \emph{monotone} if its action preserves the set $\cI$, i.e.
\begin{eqnarray*}
f\in\cI & \Longrightarrow & Pf\in\cI.
\end{eqnarray*} By Strassen's Theorem, this stability property is equivalent to the existence of a \emph{monotone Markovian coupling}: given any pair $(x,y)\in\cX^2$ with $x\preceq y$, we can jointly construct two random states $X$ and $Y$ with respective marginal distributions $P(x,\cdot)$ and $P(y,\cdot)$ such that $\PP(X\preceq Y)=1$. Such chains are ubiquitous in statistical physics: emblematic examples include  Glauber dynamics for the ferromagnetic Ising model, lazy birth-and-death chains, the noisy voter model, or the non-conservative exclusion process with arbitrary reservoir densities. We refer to  \cite[Chapter 22]{MR3726904} for several other examples, as well as many interesting properties. The purpose of the present paper is to  reveal a remarkable connection between the curvature and the spectral gap of such chains. 
\section{Main result and implications}
Our main contribution is the following surprising result, which asserts that monotone chains always saturate the inequality (\ref{ineq}): their optimal curvature $\kappa_\star$ equals their spectral gap $\gamma_\star$. Moreover, we provide a remarkably simple characterization of those constants in terms of \emph{local variations} of increasing functions. Specifically, we introduce the quantity
 \begin{eqnarray}
\vv_\star & := &  \inf_{f\in\mathring{\cI}}\max_{x\vartriangleleft y}\left\{\frac{Pf(y)-Pf(x)}{f(y)-f(x)}\right\},
 \end{eqnarray}
where $\mathring{\cI}$ denotes the set of  \emph{strictly increasing} functions  $f\colon\cX\to\dR$ (meaning that $f(x)< f(y)$ whenever $x\prec y$), and where $\vartriangleleft$ denotes the usual \emph{covering} relation on $\cX$:
\begin{eqnarray}
\label{def:covering}
x\vartriangleleft y & \Longleftrightarrow & x\prec y \textrm{ and there is no } z\in\cX\textrm{ such that } x\prec z\prec y. 
 \end{eqnarray} 
\begin{theorem}[Spectral gap and curvature of monotone chains]\label{th:main}If $P$ is monotone, then
\begin{eqnarray*}
 \kappa_\star & = & \gamma_\star \ = \ 1-\vv_\star.
\end{eqnarray*}
\end{theorem}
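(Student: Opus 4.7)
My plan is to close a three-term sandwich $1-\vv_\star \le \kappa_\star \le \gamma_\star \le 1-\vv_\star$, forcing all three quantities to coincide. The middle inequality is the a priori bound (\ref{ineq}) from the excerpt, so only the outer two need work. For the lower bound $\kappa_\star\ge 1-\vv_\star$, for each $f\in\mathring{\cI}$ I would introduce the shortest-path metric $\rho_f$ on the Hasse diagram of $(\cX,\preceq)$ in which each covering edge $x\vartriangleleft y$ carries the positive weight $f(y)-f(x)$; the connectedness hypothesis makes $\rho_f$ a genuine metric on $\cX$. Setting $\vv(f):=\max_{x\vartriangleleft y}\frac{Pf(y)-Pf(x)}{f(y)-f(x)}$, Strassen's theorem supplies, for each covering pair $x\vartriangleleft y$, a monotone coupling $(X,Y)$ of $\bigl(P(x,\cdot),P(y,\cdot)\bigr)$ with $X\preceq Y$; chaining $X$ to $Y$ in the Hasse diagram gives $\rho_f(X,Y)\le f(Y)-f(X)$, whence
$$\cW\bigl(P(x,\cdot),P(y,\cdot)\bigr)\;\le\;\EE[f(Y)-f(X)]\;=\;Pf(y)-Pf(x)\;\le\;\vv(f)\,\rho_f(x,y).$$
A shortest-path decomposition in the Hasse diagram followed by the standard gluing of optimal couplings then extends this to $\cW(\mu P,\nu P)\le\vv(f)\,\cW(\mu,\nu)$ for every $\mu,\nu\in\cP(\cX)$, so $\kappa(\rho_f)\ge 1-\vv(f)$; taking the infimum in $f$ yields $\kappa_\star\ge 1-\vv_\star$.

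For the upper bound $\gamma_\star\le 1-\vv_\star$, I would view $P$ as acting on the quotient $V:=\dR^\cX/\dR\mathbf{1}$, where its spectral radius equals $1-\gamma_\star$. Monotonicity means $P$ preserves the pointed polyhedral cone $C:=\cI/\dR\mathbf{1}\subset V$, whose non-redundant facet-defining functionals are precisely $[f]\mapsto f(y)-f(x)$ for $x\vartriangleleft y$ (any non-covering inequality telescopes along a Hasse chain), and whose interior $\mathring{\cI}/\dR\mathbf{1}$ is non-empty. The Collatz--Wielandt identity for a linear operator preserving a proper polyhedral cone then asserts
$$1-\gamma_\star \;=\; \rho(P|_V) \;=\; \inf_{[f]\in\mathring{\cI}/\dR\mathbf{1}}\,\max_{x\vartriangleleft y}\frac{Pf(y)-Pf(x)}{f(y)-f(x)} \;=\; \vv_\star,$$
closing the sandwich.

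The easy half $\rho(P|_V)\le\vv_\star$ is implicit in the first paragraph, since any operator norm dominates the spectral radius: the Lipschitz seminorm for $\rho_f$ gives $\|P\|_{V,\rho_f}\le\vv(f)$, hence $\rho(P|_V)\le\vv(f)$, and an infimum yields $\rho(P|_V)\le\vv_\star$. The hard half $\rho(P|_V)\ge\vv_\star$ is the technical crux. Krein--Rutman supplies a Perron eigenfunction $\phi\in\cI$ with $P\phi-(1-\gamma_\star)\phi\in\dR\mathbf{1}$, but $\phi$ may well lie on the boundary of $C$, with equality $\phi(x)=\phi(y)$ on some covering pair; in that case $\phi$ cannot be substituted directly into the variational expression for $\vv_\star$. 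The natural remedy is to perturb, $f=\phi+\epsilon g$ for some $g\in\mathring{\cI}$ and $\epsilon\downarrow 0$: along the ``strict'' covering pairs of $\phi$ the ratio converges to $1-\gamma_\star$, while along the ``flat'' covering pairs it reduces to $\frac{Pg(y)-Pg(x)}{g(y)-g(x)}$, which must itself be kept below $1-\gamma_\star$. Controlling this latter quantity should amount to a recursive application of Perron--Frobenius on the face of $C$ carved out by the flat edges of $\phi$ (invariant under $P$ because the monotone coupling at a flat covering pair forces both endpoints to stay in the same level set of $\phi$), and the resulting finite induction on the face lattice of $C$ is where I expect the main technical effort.
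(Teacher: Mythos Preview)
Your lower bound $\kappa_\star\ge 1-\vv_\star$ is carried out correctly and coincides with the paper's argument: the weighted Hasse-diagram metric $\rho_f$, the monotone coupling on covering pairs via Strassen, and the geodesic-plus-gluing extension to arbitrary measures are exactly the content of the paper's second lemma.

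The divergence is in the upper bound $\gamma_\star\le 1-\vv_\star$. You correctly isolate the crux as the ``hard half'' of a cone Collatz--Wielandt identity, and you correctly diagnose the obstruction: the Krein--Rutman eigenvector $\phi$ may sit on the boundary of $C$. But your proposed remedy---perturb by some $g\in\mathring{\cI}$ chosen via a recursive Perron--Frobenius on the face cut out by the flat edges of $\phi$---is left as ``where I expect the main technical effort,'' and it is not clear it goes through. The face you describe is indeed $P$-invariant (your level-set argument is right), but its elements are \emph{by definition} flat on the flat edges of $\phi$, so a Perron eigenvector of $P$ restricted to that face cannot serve as the perturbation $g$ you need; one would have to pass to some complementary object, and the recursion becomes murky.

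The paper sidesteps all of this with a one-line resolvent. Pick any centered $f\in\mathring{\cI}$ and, for $\varepsilon>0$, set
\[
f_\varepsilon \;:=\; \Bigl(I-\frac{P}{1-\gamma_\star+\varepsilon}\Bigr)^{-1}f \;=\; \sum_{n\ge 0}\frac{P^n f}{(1-\gamma_\star+\varepsilon)^n},
\]
which converges because $P$ has spectral radius $1-\gamma_\star$ on centered functions. Each summand lies in $\cI$ by monotony and the $n=0$ term lies in $\mathring{\cI}$, so $f_\varepsilon\in\mathring{\cI}$; the defining relation $Pf_\varepsilon=(1-\gamma_\star+\varepsilon)(f_\varepsilon-f)$ then gives, for every covering pair,
\[
Pf_\varepsilon(y)-Pf_\varepsilon(x)\;\le\;(1-\gamma_\star+\varepsilon)\bigl(f_\varepsilon(y)-f_\varepsilon(x)\bigr),
\]
whence $\vv_\star\le 1-\gamma_\star+\varepsilon$. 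This manufactures an interior approximate eigenvector directly, never locating $\phi$ or its face. It is in fact the standard proof of the cone Collatz--Wielandt inequality you invoke, and it replaces your entire inductive programme.
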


\paragraph{Discussion and perspectives.} Beyond the obvious theoretical interest of this surprising equivalence between three fundamentally different notions (a spectral one, a geometric one, and a variational one), we believe that the above result is of practical interest for several reasons. An immediate one is that curvature classically provides much more powerful controls  on mixing times and on concentration than  the spectral gap does: our first equality $\kappa_\star=\gamma_\star$ shows that those improvements come `for free' in all monotone chains. Another potentially rich source of applications lies in our second equality $\gamma_\star \ = \ 1-\vv_\star$. To see why, let us recall that the spectral gap of lazy reversible chains admits the variational characterization
\begin{eqnarray}
\gamma_\star & = & \inf_{f}\left\{\frac{\sum_{x,y\in\cX}\pi(x)P(x,y)\left(f(y)-f(x)\right)^2}{\sum_{x,y\in\cX}\pi(x)\pi(y)\left(f(y)-f(x)\right)^2}\right\},
\end{eqnarray}
where $\pi$ denotes the stationary law and where the infimum runs over all non-constant test functions $f\colon\cX\to\dR$. Compared to this classical formulation, our variational characterization in terms of $\vv_\star$ offers a number of practical advantages, which we now enumerate.
\begin{enumerate}
\item It does not require reversibility, nor laziness.
\item It involves a supremum instead of an infimum, so that any test function can be plugged into  the formula to produce a lower bound on the spectral gap.
\item It is purely local instead of involving an average under the stationary law $\pi$, which is sometimes very far from explicit.
\item It is suitable for comparing very different chains: for example, we see that  $\gamma_\star(P)\ge \gamma_\star(Q)$ as soon as $Q$ is `more increasing' than $P$ in the sense that $Q-P$ is monotone.
\end{enumerate}
We intend to implement those ideas on concrete models in the near future. Let us here only mention a very simple application of Theorem \ref{th:main} to strictly increasing eigenfunctions.
\paragraph{Strictly increasing eigenfunctions.} Suppose that $P$ is monotone and that it admits an eigenfunction $f$ in $\mathring{\cI}$, associated with some eigenvalue $\lambda$. We can then plug $f$ into the definition of $\vv_\star$ and use our variational characterization to deduce that $\gamma_\star\ge 1-\lambda$, while the  converse inequality is trivially forced by the very definition of $\gamma_\star$. Thus, we can safely conclude that $\gamma_\star=1-\lambda$, without having to  search for other eigenfunctions and their associated eigenvalues.  Let us record this useful observation here. 
\begin{corollary}[Strictly increasing eigenfunctions]\label{co:eigen}If $P$ is monotone and if $f\in\mathring \cI$ satisfies $Pf=\lambda f$ for some $\lambda\in\dR$, then we necessarily have $\lambda=1-\gamma_\star$. 
\end{corollary}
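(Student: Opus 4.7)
The plan is to apply the variational identity $\gamma_\star = 1-\vv_\star$ from Theorem \ref{th:main} directly to the given eigenfunction $f$, obtaining one inequality, and to rely on the bare definition of the absolute spectral gap for the other.

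First I would simply plug $f$ into the formula defining $\vv_\star$. Since $f\in\mathring{\cI}$ is an admissible test function and $Pf=\lambda f$, every covering pair $x\vartriangleleft y$ yields
\[
\frac{Pf(y)-Pf(x)}{f(y)-f(x)} \;=\; \frac{\lambda\bigl(f(y)-f(x)\bigr)}{f(y)-f(x)} \;=\; \lambda,
\]
the denominator being non-zero because $f$ is strictly increasing. The maximum over covering pairs therefore equals $\lambda$, so $\vv_\star\le\lambda$, and Theorem \ref{th:main} gives $\gamma_\star\ge 1-\lambda$.

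Next I would observe that $f$ must be non-constant: the partial order $\preceq$ connects $\cX$, so at least one pair $x\prec y$ exists, and the strict monotonicity of $f$ forces $f(x)<f(y)$. Hence $\lambda$ is a non-trivial real eigenvalue of $P$, and by the very definition of $\gamma_\star$ we have $|\lambda|\le 1-\gamma_\star$, which in particular implies $\lambda\le 1-\gamma_\star$. Combining the two inequalities yields $\lambda=1-\gamma_\star$.

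Honestly, there is no real obstacle here: the corollary is a one-line consequence of Theorem \ref{th:main}, and the author essentially dictates the argument in the paragraph preceding the statement. The only point requiring any care is the verification that $\lambda$ qualifies as a \emph{non-trivial} eigenvalue, which is handled by the connectedness hypothesis on $\preceq$ together with strict monotonicity of $f$.
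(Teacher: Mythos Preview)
Your proposal is correct and follows exactly the approach sketched in the paper: plug $f$ into the definition of $\vv_\star$ to obtain $\gamma_\star\ge 1-\lambda$ via Theorem \ref{th:main}, and invoke the definition of $\gamma_\star$ for the reverse inequality. The only addition is your explicit check that $f$ is non-constant, which the paper leaves implicit.
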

To the best of our knowledge, this result was only known to hold under the additional assumptions that $P$ is reversible and that its support only consists of pairs of comparable elements \cite[Lemma 22.18]{MR3726904}. An example of a monotone chain which fails to satisfy those two additional requirements is the non-conservative exclusion process with inhomogeneous reservoir densities. In Section \ref{sec:exclusion}, we will use the above corollary to explicitly determine the  spectral gap and optimal curvature of this process on any network. Let us first recast our main result in the  setting of continuous-time Markov chains.

\paragraph{Continuous-time version.} Consider an irreducible Markov generator $L$ on the poset $(\cX,\preceq)$, and write $(P_t)_{t\ge 0}$ for the associated semi-group. The spectral gap $\gamma_\star$ is the smallest real part of a non-zero eigenvalue of $-L$, and the optimal curvature $\kappa_\star$ is defined as before, except that the requirement (\ref{def:curvature}) is replaced with its continuous-time analogue:
\begin{eqnarray*}
\forall t\ge 0,\qquad \cW\left(\mu P_t,\nu P_t\right) & \le & e^{-\kappa t}\cW(\mu,\nu).
\end{eqnarray*}
We naturally call the generator $L$  \emph{monotone} if the cone of increasing functions $\cI$ is preserved under the action of the semi-group. In other words, $L$ is monotone if the stochastic matrix $P_t$ is monotone in the previous sense, for each $t>0$.  This allows us to apply our main theorem to $P_t$ for every $t>0$. We may then send $t\to 0$ and exploit the approximation $P_t=I+tL+o(t)$ to obtain the following continuous-time version of Theorem \ref{th:main} and Corollary \ref{co:eigen}.
\begin{corollary}[Continuous-time version]\label{co:continuous}If $L$ is monotone, then 
\begin{eqnarray*}
\kappa_\star & = & \gamma_\star \ = \  -\inf_{f\in\mathring{\cI}}\max_{x\vartriangleleft y}\left\{\frac{Lf(y)-Lf(x)}{f(y)-f(x)}\right\}.
\end{eqnarray*}
In particular, if $f\in\mathring\cI$ satisfies $-Lf=\lambda f$ for some $\lambda\in \dR$, then we necessarily have $\lambda=\gamma_\star$. 
\end{corollary}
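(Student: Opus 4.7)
The plan is to apply Theorem~\ref{th:main} to the discrete-time chain $P_t$ for each fixed $t>0$ (which is monotone by hypothesis on $L$) and carefully pass to the limit $t\to 0^+$, using the expansion $P_t = I + tL + O(t^2)$. The three-way equality for $P_t$ then degenerates into the three-way equality for $L$.

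First I would identify the spectral limit. Since the eigenvalues of $P_t = e^{tL}$ are the exponentials $e^{t\lambda}$ of eigenvalues $\lambda$ of $L$, one obtains $1 - \gamma_\star(P_t) = e^{-t\gamma_\star(L)}$, and hence $\gamma_\star(P_t)/t \to \gamma_\star(L)$. Next, writing
\[
\mathrm{M}(f) := \max_{x\vartriangleleft y}\frac{Lf(y)-Lf(x)}{f(y)-f(x)}, \qquad \mathrm{M}_t(f) := \max_{x\vartriangleleft y}\frac{P_t f(y)-P_t f(x)}{f(y)-f(x)},
\]
the expansion $P_t f = f + tLf + O(t^2)$ gives $\mathrm{M}_t(f) = 1 + t\mathrm{M}(f) + O_f(t^2)$ for each $f \in \mathring\cI$. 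Substituting any fixed $f$ into Theorem~\ref{th:main} applied to $P_t$ and letting $t\to 0^+$ yields $\gamma_\star(L) \ge -\mathrm{M}(f)$, hence $\gamma_\star(L) \ge -\inf_f \mathrm{M}(f)$. For the reverse direction I would use compactness: normalizing by $\min_{x\vartriangleleft y}(f(y)-f(x)) = 1$ and $f(x_0) = 0$ at a fixed base point produces a compact section $K \subset \mathring\cI$ on which the expansion is \emph{uniformly} $O(t^2)$. For each $t$ I pick $f_t \in K$ attaining $\vv_\star(P_t) = \mathrm{M}_t(f_t) = e^{-t\gamma_\star(L)}$; the uniform expansion then forces $\mathrm{M}(f_t) = -\gamma_\star(L) + O(t)$, so any subsequential limit $f^* \in K$ achieves $\mathrm{M}(f^*) = -\gamma_\star(L)$, giving $\inf_f \mathrm{M}(f) \le -\gamma_\star(L)$.

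The identification $\kappa_\star(L) = \gamma_\star(L)$ has a trivial half via the complex eigenfunction argument: $\|P_t f\|_\lip = e^{t\mathrm{Re}(\lambda)}\|f\|_\lip$ forces $\kappa_\star(L) \le -\mathrm{Re}(\lambda)$ for every non-trivial eigenvalue $\lambda$. For the reverse, Theorem~\ref{th:main} supplies, for each $t>0$, a metric $\rho_t$ achieving the curvature of $P_t$ at level $\gamma_\star(P_t)$; after normalizing (say by diameter) I would extract a subsequential limit $\rho^*$ and verify that $\rho^*$ produces continuous-time Wasserstein contraction at rate $\gamma_\star(L)$. This limit extraction in the space of metrics is the main technical obstacle and will rely on continuity of the discrete curvature in its metric argument combined with the semigroup property to interchange the relevant limits. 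Once both equalities are established, the eigenfunction assertion is immediate: if $-Lf = \lambda f$ with $f \in \mathring\cI$, then $\mathrm{M}(f) = -\lambda$, so the variational formula forces $\gamma_\star \ge \lambda$, while $\lambda \ge \gamma_\star$ is immediate from the definition of $\gamma_\star$ (noting that $\lambda \ne 0$ since $f$ is non-constant and $P$ is irreducible).
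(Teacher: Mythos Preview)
Your overall strategy matches the paper's one-line sketch (apply Theorem~\ref{th:main} to $P_t$ and send $t\to 0^+$), and your treatment of the spectral limit, the \emph{easy} inequality $\gamma_\star(L)\ge -\inf_f \mathrm{M}(f)$, and the eigenfunction corollary are all fine. The gap is in your \emph{reverse} inequality $\inf_f \mathrm{M}(f)\le -\gamma_\star(L)$. The set $K=\{f\in\mathring\cI: f(x_0)=0,\ \min_{x\vartriangleleft y}(f(y)-f(x))=1\}$ is \emph{not} compact: the remaining increments are unbounded (already on the three-point chain), so $\|f\|_\infty$ and hence $\|L^2 f\|_\infty$ are unbounded on $K$, and the remainder in $M_t(f)=1+t\mathrm{M}(f)+O(t^2)$ is \emph{not} uniform. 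Moreover, the infimum $\vv_\star(P_t)=\inf_f M_t(f)$ is not known to be attained (this is precisely why the paper's Lemma~1 uses approximate minimisers), so the choice ``$f_t\in K$ attaining $\vv_\star(P_t)$'' is unjustified. Without uniformity you cannot pass from $M_t(f_t)=e^{-t\gamma_\star(L)}$ to $\mathrm{M}(f_t)=-\gamma_\star(L)+O(t)$.

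The clean fix is to bypass the limit entirely and adapt the resolvent construction of Lemma~1 directly to $L$: fix a centred $g\in\mathring\cI$, set $f_\varepsilon:=\int_0^\infty e^{(\gamma_\star(L)-\varepsilon)t}P_t g\,dt\in\mathring\cI$, and check that $Lf_\varepsilon=-(\gamma_\star(L)-\varepsilon)f_\varepsilon-g$, whence $\mathrm{M}(f_\varepsilon)\le -\gamma_\star(L)+\varepsilon$. Similarly, your ``limit extraction in the space of metrics'' for $\kappa_\star$ is unnecessary: for each $f\in\mathring\cI$ the metric $\rho_f$ from the paper's Lemma~2 is \emph{independent of $t$}, and sub-multiplicativity of the Wasserstein contraction factor $c(t)=M_t(f)$ along the semigroup (so $c(t)\le c(t/n)^n\to e^{t\mathrm{M}(f)}$) gives $\kappa(L,\rho_f)=-\mathrm{M}(f)$ directly, hence $\kappa_\star(L)\ge -\inf_f\mathrm{M}(f)=\gamma_\star(L)$.
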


\section{Proof of the main result}
We henceforth assume  that $P$ is monotone, and our goal is to establish the double identity
\begin{eqnarray*}
\gamma_\star & = & 1-\vv_\star \ = \ \kappa_\star.
\end{eqnarray*}
Since we always have $\kappa_\star\le \gamma_\star$,  our task reduces to the following two inequalities:
\begin{enumerate}[(i)]
\item $\gamma_\star\le 1-\vv_\star$;
\item $\kappa_\star \ge 1-\vv_\star$.
\end{enumerate}
We now prove each of those two claims separately. 
\subsection{Proof of Claim (i)}
The first claim is trivial if $P$ admits an eigenfunction $f$ in $\mathring\cI$, since the latter may then be simply plugged into the definition of $\vv_\star$. Unfortunately, not all monotone chains admit an eigenfunction in $\mathring \cI$: a simple counter-example is the product of two chains on $\{0,1\}$ with different spectral gaps.  Our solution consists in considering `approximate eigenvectors' in the sense of the following lemma, which establishes Claim (i) above. 
\begin{lemma}[Proving Claim (i)]For each $\varepsilon>0$, there is $f_\varepsilon\in\mathring{\cI}$ such that 
\begin{eqnarray}
\max_{x\vartriangleleft y}\left\{\frac{Pf_\varepsilon(y)-Pf_\varepsilon(x)}{f_\varepsilon(y)-f_\varepsilon(x)}\right\}  & \le & 1-\gamma_\star+\varepsilon.
\end{eqnarray}
\end{lemma}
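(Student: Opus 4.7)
The plan is to produce $f_\varepsilon$ as a truncated Neumann-type series applied to a reference function in $\mathring{\cI}$. Monotony will guarantee that the series sits in $\cI$, strict monotonicity of the leading term will lift it into $\mathring{\cI}$, and a geometric correction will drag the local-variation quotient below $1-\gamma_\star+\varepsilon$.

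First I would fix any strictly increasing function $g\in\mathring{\cI}$ (for instance, the rank of $x$ along a linear extension of $\preceq$, which exists since $\cX$ is finite and connected by $\preceq$). Set $v:=1-\gamma_\star+\varepsilon$ and, for an integer $N$ to be tuned later, define
\[
f \; := \; \sum_{n=0}^{N} v^{-n}\,P^{n}g.
\]
By monotony each $P^n g\in\cI$, so $f$ is a non-negative combination of increasing functions; and since the $n=0$ term $g$ is strictly increasing while the others are at least weakly so, we indeed have $f\in\mathring{\cI}$.

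The second step is a telescoping identity: multiplying by $P$ and reindexing yields
\[
Pf \; = \; v\,f \; - \; v\,g \; + \; v^{-N}\, P^{N+1} g,
\]
so that for every covering pair $x\vartriangleleft y$,
\[
v\bigl(f(y)-f(x)\bigr)-\bigl(Pf(y)-Pf(x)\bigr)\;=\; v\bigl(g(y)-g(x)\bigr)\;-\;v^{-N}\bigl(P^{N+1}g(y)-P^{N+1}g(x)\bigr).
\]
The desired inequality is equivalent to the right-hand side being non-negative. The first term is bounded below by $v\,m$, where $m:=\min_{x\vartriangleleft y}\bigl(g(y)-g(x)\bigr)>0$, so it suffices to show that the correction term is smaller than $v\,m$ once $N$ is large enough.

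The final ingredient is spectral decay. The subspace $\{h\colon\pi(h)=0\}$ is $P$-invariant and carries the entire non-trivial spectrum of $P$, so Gelfand's spectral radius formula provides $\|P^{N}h\|_{\infty}\le C\,\rho^{N}\|h\|_\infty$ on this subspace for any fixed $\rho\in(1-\gamma_\star,v)$. Applied to $h:=g-\pi(g)$, this yields $v^{-N}\bigl|P^{N+1}g(y)-P^{N+1}g(x)\bigr|\to 0$ uniformly in $x,y$, and taking $N$ large enough concludes the proof. The main obstacle is that $P$ is not assumed reversible: Jordan blocks and complex eigenvalues preclude the sharp rate $(1-\gamma_\star)^n$ for $P^n$ on the non-trivial subspace, which is precisely why the $\varepsilon$-slack built into $v$ is essential -- Gelfand's formula still gives exponential decay at any rate strictly greater than $1-\gamma_\star$, and that is exactly what the construction consumes.
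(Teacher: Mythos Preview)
Your argument is correct and follows essentially the same approach as the paper: both construct $f_\varepsilon$ as a Neumann-type series $\sum_n v^{-n}P^n g$ with $v=1-\gamma_\star+\varepsilon$, relying on monotony to keep each term in $\cI$ and on the spectral radius of $P$ restricted to $\{h:\pi h=0\}$ being $1-\gamma_\star$. The only difference is cosmetic: the paper first centers $g$ so that the full infinite series converges and yields the clean identity $Pf_\varepsilon=v(f_\varepsilon-g)$, whereas you keep $g$ uncentered, truncate at level $N$, and control the remainder $v^{-N}P^{N+1}g$ via Gelfand's formula --- the same spectral fact in a slightly different guise.
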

\begin{proof}
We first note that the set $\mathring\cI$ is not empty. Indeed, a strictly increasing function $f\colon\cX\to\dR$ can be constructed sequentially by choosing any minimal element, assigning it a value that is strictly larger than all previously assigned ones, removing it from the poset and iterating. Upon replacing $f$ by $f-\pi f$ if necessary, we may further assume that $f$ is centered under the stationary law $\pi$. Now, fix $\varepsilon>0$ and consider the centered function
\begin{eqnarray*}
f_\varepsilon & := & \left({\rm I}-\frac{P}{1-\gamma_\star+\varepsilon}\right)^{-1}f \ = \ \sum_{n=0}^\infty\left(\frac{1}{1-\gamma_\star+\varepsilon}\right)^n P^n f,
\end{eqnarray*}
whose existence is guaranteed by the fact that the restriction of $P$ to centered functions has spectral radius $1-\gamma_\star$. Note that $f_\varepsilon$ is strictly increasing, because each term in the above series is increasing and the first one is strictly increasing. Moreover, we have $Pf_\varepsilon =  (1-\gamma_\star+\varepsilon)(f_\varepsilon-f)$ by construction, so that for any $x\vartriangleleft y$,
\begin{eqnarray*}
Pf_\varepsilon(y)-Pf_\varepsilon(x) & = & (1-\gamma_\star+\varepsilon)\left[\left(f_\varepsilon(y)-f_{\varepsilon}(x)\right)-\left(f(y)-f(x)\right)\right]\\
& \le & (1-\gamma_\star+\varepsilon)\left(f_\varepsilon(y)-f_{\varepsilon}(x)\right),
\end{eqnarray*}
as desired. 
\end{proof}
\subsection{Proof of Claim (ii)}

Our proof of Claim (ii) relies on the simple observation that the curvature $\kappa$ of a monotone chain admits a remarkably simple expression when the underlying metric $\rho$ has a certain `gradient structure', which we now describe. Recall that the \emph{Hasse diagram}  of the poset $(\cX,\preceq)$ is the graph $G=(\cX,E)$ whose edge-set is obtained by symmetrizing  the covering relation $\vartriangleleft$ defined at (\ref{def:covering}):
\begin{eqnarray*}
E & := & \left\{(x,y)\in\cX^2\colon x\vartriangleleft y\textrm{ or }y\vartriangleleft x\right\}.
\end{eqnarray*}
As usual, a \emph{path} in $G$ is a finite sequence of vertices $\sigma=(\sigma_0,\ldots,\sigma_\ell)$ such that $\{\sigma_{i-1},\sigma_i\}\in E$ for all $1\le i\le \ell$. The integer $\ell\in\{0,1,2,\ldots,\}$ is called the length of the path, and denoted by $|\sigma|$. We write $\Sigma_{x\to y}$ for the set of all paths  starting at $x$ and ending at $y$. Now, given any function $f\in \mathring{\cI}$, we define a metric $\rho$ on $\cX$ by the following formula: 
\begin{eqnarray}
\label{def:rho}
\rho(x,y) & := & \min_{\sigma\in\Sigma_{x\to y}}\left\{\sum_{i=1}^{|\sigma|}|f(\sigma_i)-f(\sigma_{i-1})|\right\}.
\end{eqnarray}
On the resulting metric space $(\cX,\rho)$, the curvature of any monotone chain is given by the following lemma, from which Claim (ii) readily follows by taking a supremum over all $f\in\mathring\cI$. 
\begin{lemma}[Proving Claim (ii)]The curvature of $P$ with respect to the metric (\ref{def:rho}) is 
\begin{eqnarray*}
\kappa(\rho) & = & 1-\max_{x\vartriangleleft y}\left\{\frac{Pf(y)-Pf(x)}{f(y)-f(x)}\right\}.
\end{eqnarray*}
\end{lemma}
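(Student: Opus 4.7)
I will use Kantorovich--Rubinstein duality to rewrite $1-\kappa(\rho)=\sup_{g}\|Pg\|_{\lip}/\|g\|_{\lip}$, and then identify this supremum with $M:=\max_{x\vartriangleleft y}(Pf(y)-Pf(x))/(f(y)-f(x))$. The whole argument hinges on a preliminary geometric identity, which I would prove first: $\rho(x,y)=f(y)-f(x)$ whenever $x\preceq y$. The lower bound is a one-line telescoping --- for any Hasse-diagram path $\sigma$ from $x$ to $y$, $\sum_i|f(\sigma_i)-f(\sigma_{i-1})|\ge\left|\sum_i(f(\sigma_i)-f(\sigma_{i-1}))\right|=f(y)-f(x)$. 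For the matching upper bound, I would exhibit a saturating path by extracting a maximal chain $x=x_0\vartriangleleft\cdots\vartriangleleft x_k=y$, whose $f$-length telescopes to exactly $f(y)-f(x)$. An immediate consequence is that $\|f\|_{\lip}=1$, with the value $1$ achieved on every Hasse edge.

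\textbf{The two bounds.} With this preparation in place, the lower bound $\sup_{g}\|Pg\|_{\lip}/\|g\|_{\lip}\ge M$ is immediate by plugging in $g=f$: monotony of $P$ forces $Pf\in\cI$, so on every Hasse edge $x\vartriangleleft y$ the Lipschitz ratio $(Pf(y)-Pf(x))/\rho(x,y)$ equals $(Pf(y)-Pf(x))/(f(y)-f(x))$, whose maximum is $M$ by definition. For the matching upper bound, I would fix an arbitrary $g$ with $\|g\|_{\lip}\le 1$ and reduce the desired estimate $|Pg(u)-Pg(v)|\le M\rho(u,v)$ to the case where $\{u,v\}$ is a single Hasse edge, by concatenating along a minimizing Hasse path and applying the triangle inequality termwise. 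To dispose of an individual Hasse edge $x\vartriangleleft y$, I would invoke Strassen's theorem to obtain a monotone coupling $(X,Y)$ of $P(x,\cdot)$ and $P(y,\cdot)$ with $X\preceq Y$ almost surely. The Lipschitz assumption on $g$ then gives $|g(Y)-g(X)|\le\rho(X,Y)$, and the gradient-structure identity applied to the random comparable pair $(X,Y)$ rewrites this last quantity as $f(Y)-f(X)$. Taking expectations yields $|Pg(y)-Pg(x)|\le Pf(y)-Pf(x)\le M(f(y)-f(x))=M\rho(x,y)$, which closes the argument.

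\textbf{Main obstacle.} The truly delicate point is recognizing that the gradient-structure identity $\rho(X,Y)=f(Y)-f(X)$ on comparable pairs is precisely what makes the monotone coupling interact cleanly with the metric $\rho$. Without that identity one would only have the trivial inequality $\rho(X,Y)\ge f(Y)-f(X)$, which points in the wrong direction to bound $|Pg(y)-Pg(x)|$ by $Pf(y)-Pf(x)$ after taking expectations. Once the identity is in hand, the rest of the proof dissolves into a clean three-way interplay between duality, the triangle inequality, and the monotone coupling.
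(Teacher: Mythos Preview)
Your proof is correct and rests on the same three ingredients as the paper's: the identity $\rho(x,y)=f(y)-f(x)$ on comparable pairs, Strassen's monotone coupling on each Hasse edge, and the reduction to edges via a geodesic path and the triangle inequality. The only difference is cosmetic: you work on the Lipschitz side of Kantorovich duality, bounding $\|Pg\|_{\lip}$ for an arbitrary test function $g$, whereas the paper stays on the Wasserstein side, establishing $\cW(P(x,\cdot),P(y,\cdot))=Pf(y)-Pf(x)$ directly on Hasse edges and then extending to general measures via an explicit two-step coupling.
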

\begin{proof}
It is clear from the definition (\ref{def:rho}) that our metric $\rho$ satisfies
\begin{eqnarray}
\rho(x,y) & \ge & f(y)-f(x),
\end{eqnarray}
for all $x,y\in\cX$. Moreover, there is equality whenever $x\preceq y$, since the very definition of the covering relation $\vartriangleleft$ then guarantees the existence of a path $\sigma$ from $x$ to $y$ such that $\sigma_{i-1}\vartriangleleft \sigma_i$ for all $1\le i \le |\sigma|$. Now, fix $x,y\in\cX^2$ such that $x \vartriangleleft y$. Then, for any coupling $(X,Y)$ of $P(x,\cdot),P(y,\cdot)$, the above observation ensures that 
\begin{eqnarray}
\EE\left[\rho(X,Y)\right] & \ge & \EE\left[f(Y)-f(X)\right] \ = \ Pf(y)-Pf(x),
 \end{eqnarray} 
with equality as soon as $\PP(X\preceq Y)=1$. But our monotony assumption on $P$ precisely guarantees the existence of such a monotone coupling. Thus, we obtain the two identities
\begin{eqnarray*}
{\cW\left(P(x,\cdot),P(y,\cdot)\right)} & = &  Pf(y)-Pf(x),\\
\rho(x,y) & = & f(y)-f(x),
\end{eqnarray*}
whenever $x\vartriangleleft y$. This already shows that  $\vv:=\max_{x\vartriangleleft y}\left\{\frac{Pf(y)-Pf(x)}{f(y)-f(x)}\right\}$ is the smallest number such that the inequality
\begin{eqnarray}
\label{Dirac}
{\cW\left(P(x,\cdot),P(y,\cdot)\right)}\le \vv\, \rho(x,y).
\end{eqnarray}
holds whenever $\{x,y\}\in E$. Now, observe that this last inequality automatically extends to arbitrary points $x,y\in\cX$, since we can apply it to all edges along a \emph{geodesic} path from $x$ to $y$ (i.e., a path achieving the minimum in the definition of $\rho(x,y)$) and then invoke the triangle inequality for $\cW\left(\cdot,\cdot\right)$ to conclude. Finally, given $\mu,\nu\in\cP(\cX)$, we can construct a coupling $(X',Y')$ of $\mu P$ and $\nu P$ in two steps as follows:
\begin{enumerate}
\item Generate  $(X,Y)$ according to an optimal coupling of $\mu$ and $\nu$.
\item For each possible realization $(x,y)\in\cX^2$, conditionally on the event $\left\{(X,Y)=(x,y)\right\}$, let $(X',Y')$ be distributed as an optimal coupling of $P(x,\cdot)$ and $P(y,\cdot)$. 
\end{enumerate} 
The resulting pair $(X',Y')$ is clearly  a coupling of $\mu P$ and $\nu P$ and therefore, 
\begin{eqnarray*}
\cW(\mu P,\nu P) & \le & \EE\left[\rho(X',Y')\right] \\
& = & \sum_{x,y\in\cX}\PP\left((X,Y)=(x,y)\right)\cW\left(P(x,\cdot),P(y,\cdot)\right)\\
& \le & \vv\sum_{x,y\in\cX}\PP\left((X,Y)=(x,y)\right)\rho(x,y)\\
& = &  \vv\EE\left[\rho(X,Y)\right]\\
& = & \vv\cW(\mu,\nu),
\end{eqnarray*}
where the middle inequality is simply (\ref{Dirac}). Since this is true for all $\mu,\nu\in\cP(\cX)$, we may safely conclude from this computation that $1-\vv\le \kappa(\rho)$, and the optimality of $\vv$ in (\ref{Dirac}) shows that this is in fact an equality, as desired.  
\end{proof}

\section{Application to the exclusion process} 
\label{sec:exclusion}
The exclusion process is a classical model of interacting random walks in which indistinguishable particles attempt to evolve independently on a graph, except that their jumps are canceled if the destination is already occupied \cite{MR268959,MR2108619}. 
 Here we consider the non-conservative variant of the model, where particles may additionally be created and annihilated at certain vertices, modeling contact with external reservoirs. We refer the reader to the seminal works \cite{MR1997915,MR2437527} or the more recent papers \cite{gantert2021mixing,goncalves2021sharp,MR4546624,tran2022cutoff} for background and motivation. The model is parametrized by an integer $n\in\dN$ representing the number of sites, a non-negative symmetric array  $(c_{ij})_{1\le i,j\le n}$ specifying the exchange rates, and two non-negative vectors $(r^+_i)_{1\le i\le n}$ and $(r^-_i)_{1\le i\le n}$ describing the creation and annihilation rates. 
The exclusion process with those parameters is the continuous-time Markov chain with state space $\cX=\{0,1\}^n$ and whose infinitesimal generator  $L$ acts on any $f\colon \cX\to\dR$ as follows:
\begin{eqnarray*}
(L f)(x) & := & \frac 12 \sum_{i=1}^n\sum_{j=1}^n c_{ij}\left[f(x^{i\leftrightarrow j})-f(x)\right] \\ &  & +\sum_{i=1}^nr^+_i\left[f(x^{i,1})-f(x)\right]
\\ & &  +\sum_{i=1}^nr^-_i\left[f(x^{i,0})-f(x)\right],
\end{eqnarray*}
where $x^{i\leftrightarrow j}$, $x^{i,1}$ and $x^{i,0}$ naturally denote the vectors obtained from $x$ by respectively swapping the $i-$th and $j-$th coordinates (exchange), resetting the $i-$th coordinate to $1$ (creation), and resetting it to  $0$ (annihilation). To ensure irreducibility, we henceforth assume that the support of the matrix $c$ connects the sites, and that the supports of $r^+$ and $r^{-}$ are not empty. 

A simple object that plays an important role in the analysis of the above process is the \emph{Laplace matrix} $\Delta=(\Delta_{ij})_{1\le i,j\le n}$ of the underlying network, defined as follows:
\begin{eqnarray*}
\Delta_{ij} & := &
\left\{
\begin{array}{ll}
c_{ij} & \textrm{if }j\ne i\\
-r^+_i-r^-_i-\sum_{k\in[n]\setminus\{i\}}c_{ik} & \textrm{if }j=i.
\end{array}
\right.
\end{eqnarray*}
This  symmetric negative definite matrix describes the evolution of a single random walker moving across the $n$ sites according to the conductances $(c_{ij})_{1\le i,j\le n}$ and killed at the space-varying rates $(r^+_i+r^-_i)_{1\le i\le n}$. Our Theorem \ref{th:exclusion} below shows that the mixing properties of the $2^n\times 2^n$ generator $L$ are essentially dicated by those of the $n\times n$ matrix $\Delta$. This remarkable `dimensionality reduction' constitutes a non-conservative extension of  Aldous' celebrated \emph{spectral gap conjecture}, now proved by Caputo, Liggett and Richthammer  \cite{MR2629990}. Interestingly, a similar phenomenon has recently been established in several other interacting particle models \cite{MR2429910,MR3984254,MR4152648,caputo2020spectral,quattropani2021mixing,bristiel2021entropy, kim2023spectral}. However, to the best of our knowledge, its validity for the exclusion process was only known to hold in the reversible case where the vector $\kappa^++\kappa^-$ is constant \cite{MR4546624}. %It can be used to explicitly compute the spectral gap in various examples.
\begin{theorem}[Dimensionality reduction]\label{th:exclusion}
The spectral gap and optimal curvature of the generator $L$ coincide with the smallest eigenvalue of the  matrix $-\Delta$. 
\end{theorem}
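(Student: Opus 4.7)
The approach is to reduce Theorem \ref{th:exclusion} to a single invocation of Corollary \ref{co:continuous}: it will suffice to exhibit a strictly increasing function $g\colon\cX\to\dR$ with $-Lg=\lambda_\star g$, where $\lambda_\star:=\lambda_{\min}(-\Delta)$. Corollary \ref{co:continuous} will then identify $\gamma_\star$ with $\lambda_\star$, and Theorem \ref{th:main} gives $\kappa_\star=\gamma_\star$ at no extra cost.

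First I would record that $L$ is monotone with respect to the coordinate-wise partial order on $\cX=\{0,1\}^n$. Given $x\preceq y$, a monotone coupling is obtained by driving both configurations with the very same Poisson clocks for each exchange, creation, and annihilation event; the symmetry $c_{ij}=c_{ji}$ and a brief case analysis show that the order is preserved after every transition.

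I would then build the eigenfunction from a Perron vector of $\Delta$. Since $\Delta$ has non-negative off-diagonal entries supported on the conductance graph, which connects the sites by assumption, the shifted matrix $\Delta+cI$ is non-negative and irreducible for large $c>0$, and Perron--Frobenius produces a strictly positive vector $v\in\dR^n$ with $\Delta v=-\lambda_\star v$, where $-\lambda_\star$ is the top eigenvalue of $\Delta$. The quadratic-form identity
\begin{equation*}
\langle v,(-\Delta)v\rangle \;=\; \sum_i (r^+_i+r^-_i)v_i^2+\sum_{i<j}c_{ij}(v_i-v_j)^2,
\end{equation*}
combined with the assumption that the supports of $r^+$ and $r^-$ are non-empty, forces $\lambda_\star>0$.

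Finally, I would test $L$ against the linear function $f(x):=\sum_i v_ix_i$. A direct expansion, using $c_{ij}=c_{ji}$ to symmetrize the double sum $\tfrac12\sum_{ij}c_{ij}(v_i-v_j)(x_j-x_i)$ and gathering the creation and annihilation contributions, should yield
\begin{equation*}
Lf(x) \;=\; \sum_{i=1}^n x_i(\Delta v)_i + \sum_{i=1}^n r^+_i v_i \;=\; -\lambda_\star f(x)+\langle r^+,v\rangle.
\end{equation*}
Since $\lambda_\star>0$, the shifted function $g(x):=f(x)-\lambda_\star^{-1}\langle r^+,v\rangle$ is still strictly increasing and now satisfies $-Lg=\lambda_\star g$ exactly, at which point Corollary \ref{co:continuous} closes the proof. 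The only non-routine step will be the swap computation above, where one must exploit the symmetry of $c$ to collapse the double sum into $\sum_i x_i\sum_j c_{ij}(v_j-v_i)$ and recognize the result as $\langle x,\Delta v\rangle$; once this identity is in hand, the rest is bookkeeping.
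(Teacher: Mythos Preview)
Your proposal is correct and follows essentially the same route as the paper: verify monotonicity via the obvious synchronous coupling, take the Perron eigenvector $v$ of $-\Delta$, build an affine eigenfunction of $L$ from it, and invoke Corollary~\ref{co:continuous}. The only cosmetic difference is that the paper packages the additive constant as $\langle v,p\rangle$ with $p:=(-\Delta)^{-1}r^+$, which by symmetry of $\Delta$ equals your $\lambda_\star^{-1}\langle r^+,v\rangle$, so your $g$ and the paper's eigenfunction are literally the same function.
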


\begin{proof}

Let $\lambda>0$ be the smallest eigenvalue of the definite positive matrix $-\Delta$. By Perron's theorem and our irreducibility assumption, there is a corresponding eigenvector $v=(v_i)_{1\le i\le n}$ whose entries are all positive. Set $p:=(-\Delta)^{-1} r^+$, and consider the function $f\colon\{0,1\}^n\to\dR$ defined by
\begin{eqnarray*}
f(x) & := & \langle v,x-p\rangle \ = \ \sum_{i=1}^n v_i(x_i-p_i).  
\end{eqnarray*}
We claim that $f$ is an eigenfunction of $L$ corresponding to the eigenvalue $-\lambda$. Indeed, using the definition of $L$, we find 
\begin{eqnarray*}
(Lf)(x) & = & \sum_{i=1}^nv_i\left[\sum_{j=1}^nc_{ij}(x_j-x_i)+r^+_i(1-x_i)-r^-_ix_i\right]\\
&  = & \langle v,\Delta x +r^+\rangle\\
&  = & \langle v,\Delta(x-p) \rangle\\
& = &  \langle \Delta v,x-p \rangle\\
& = & -\lambda f(x),
\end{eqnarray*}
where we have successively used the identity $r^+=-\Delta p$, the symmetry of $\Delta$ and  the identity $\Delta v=-\lambda v$.  Now, the positivity of $v$ ensures that the eigenfunction $f$ is strictly increasing with respect to  the usual coordinate-wise order $\preceq$ on $\{0,1\}^n$. Moreover, this order is clearly preserved under each  possible transition $x\mapsto x^{i\leftrightarrow j}$, $x\mapsto x^{i,1}$ and $x\mapsto x^{i,0}$, and hence also under the `obvious' coupling that consists in performing those transitions at the same random times in both trajectories. Thus, $L$ is monotone and we may invoke Corollary \ref{co:continuous} to conclude that $\gamma_\star=-\lambda$, as desired.
\end{proof}

\paragraph{Acknowledgment.} The author warmly thanks Pietro Caputo for suggesting several interesting possible applications, which will be investigated in the near future. This work was partially supported by Institut Universitaire de France. 

\bibliographystyle{plain}
\bibliography{draft}

\end{document}